%% Sample article for Algebra Universalis: AUsample.tex
%% April, 2021

%%%%%%%%%%%%%%%%%%%%%%%%%%%%%%%%%%%%%%%%%%%%%%%%%%%%%%%%%%%%%%%%%%%%%%
%% PREAMBLE
%%%%%%%%%%%%%%%%%%%%%%%%%%%%%%%%%%%%%%%%%%%%%%%%%%%%%%%%%%%%%%%%%%%%%%

\documentclass{birkau}

\usepackage{amsmath,amssymb,url}
\usepackage{enumitem} %%% Use the macros provided by this package when list labels stick out into the left margin. See the list in Section 7.
\usepackage[all]{xy} %for making commutative diagrams

\numberwithin{equation}{section}

\theoremstyle{plain}
\newtheorem{theorem}{Theorem}[section]
\newtheorem{lemma}[theorem]{Lemma}

\newtheorem{corollary}[theorem]{Corollary}

\theoremstyle{definition}
\newtheorem{definition}[theorem]{Definition}

 %% Use this construction for an unnumbered declaration.

%%%%%%%%%%%%%%%%%%%%%%%%%%%%%%%%%%%%%%%%%%%%%%%
%% Sample macros
%%%%%%%%%%%%%%%%%%%%%%%%%%%%%%%%%%%%%%%%%%%%%%%

%%%%%%%%%%%%%%%%%%%%%%%%%%%%%%%%%%%%%%%%%%%%%%%

\begin{document}

%%%%%%%%%%%%%%%%%%%%%%%%%%%%%%%%%%%%%%%%%%%%%%%%%%%%%%%%%%%%%%%%%%%%%%
%% FRONT MATTER
%%%%%%%%%%%%%%%%%%%%%%%%%%%%%%%%%%%%%%%%%%%%%%%%%%%%%%%%%%%%%%%%%%%%%%

\title[Fremlin's Archimedean Riesz space tensor product]{Two results on Fremlin's Archimedean Riesz space tensor product}

%% For a single-authored paper, please use .
%% For multiple-authored papers, the line above MUST NOT be commented out
%% and $^*$ should be attached to the end of the name of the corresponding
%% author, as in the example below. 

%% The $^*$ on the name of the corresponding author on the submitted 
%% version of all multiple-authored papers is required by new EU data 
%% protection regulations. In line with the policy of Algebra Universalis 
%% since its inception, the $^*$ will not appear on the published version 
%% of the paper.

%% First author: in the order \author, \address, \urladdr, \email
%% For a sole-authored paper, use the \author[]{} command. 
%% For a multiple-authored paper, use the command \corrauthor[]{} for the corresponding author.
%% The corresponding author does not have to be the first-named author.

%% Second author: in the order \author, \address, \urladdr, \email
\author[G. Buskes]{Gerard Buskes}
\address{Department of Mathematics\\University of Mississippi\\Mississippi 38677\\USA}
\email{mmbuskes@olemiss.edu}

\corrauthor[L.P. Thorn]{Page Thorn}
\address{Department of Mathematics\\
University of Mississippi\\Mississippi 38677\\USA}
%\urladdr{http://www.cs.uwinnebago.edu/homepages/~menuhin}
\email{lthorn@go.olemiss.edu}

%% Thanks (Optional)
%\thanks{The authors thank the University of Mississippi
%for its support during the preparation of this paper.}

%% Dedication (Optional)
%\dedicatory{This article is dedicated to AU authors, present and future}

%% AMS subject classification; see http://www.ams.org/msc
%% List classification codes in order of relevance
\subjclass{46A40 , 46M05}

%% Key words and phrases
\keywords{Vector lattice, Tensor product, Dedekind $\alpha$-completeness, Ideal}

\begin{abstract}
In this paper, we characterize when, for any infinite cardinal $\alpha$, the Fremlin tensor product of two Archimedean Riesz spaces (see \cite{fremlin_tensor_1972}) is Dedekind $\alpha$-complete. We also provide an example of an ideal $I$ in an Archimedean Riesz space $E$ such that the Fremlin tensor product of $I$ with itself is not an ideal in the Fremlin tensor product of $E$ with itself.
\end{abstract}

\maketitle

\section{Introduction}
The algebraic tensor product of two Archimedean Riesz spaces is rarely an Archimedean Riesz space. In \cite{fremlin_tensor_1972}, D. H. Fremlin defines the Archimedean Riesz space tensor product that is unique up to Riesz isomorphism. The algebraic tensor product is embedded as a linear subspace in Fremlin's tensor product which may be identified with the Riesz subspace generated by the algebraic tensor product. The Fremlin tensor product factors every Riesz bimorphism into the composition of the tensor map and a corresponding unique Riesz homomorphism.
\par
In 4C of \cite{fremlin_tensor_1974}, Fremlin gives an example of an Archimedean Riesz space, namely $L^2([0,1])$, that is Dedekind complete but whose Fremlin tensor product with itself lacks Dedekind completeness even when completed relative to its norm. We characterize when the Fremlin tensor product of Dedekind complete Riesz spaces is Dedekind complete.

%\begin{defn} Let $E$, $F$, and $G$ be Archimedean Riesz spaces. A \emph{Riesz bimorphism} is a bilinear map $T\colon E\times F \to G$ such that the maps
%\begin{center}
%$z\longrightarrow T(z,y)\colon E\to G$\\
%$z\longrightarrow T(x,z)\colon F\to G$
%\end{center}
%are Riesz homomorphisms for all $x\in E^+$ and $y\in F^+$.
%\end{defn}

\section{Preliminary Material}

\begin{definition}(3.1 of \cite{fremlin_tensor_1972}) Let $E$, $F$, and $G$ be Archimedean Riesz spaces. A \emph{Riesz bimorphism} is a bilinear map $T:E\times F \to G$ such that the maps
\begin{center}
$z\longmapsto T(z,y):E\to G$\\
$z\longmapsto T(x,z):F\to G$
\end{center}
are Riesz homomorphisms for all $x\in E^+$ and all $y\in F^+$.
\end{definition}

For the reader's convenience, we summarize Theorem 4.2 and Corollary 4.4 of \cite{fremlin_tensor_1972}.

\begin{theorem}\label{tensor product} (4.2 and 4.4 of \cite{fremlin_tensor_1972}) Let $E$ and $F$ be Archimedean Riesz spaces. There exists an Archimedean Riesz space $G$ and a Riesz bimorphism $\varphi\colon E\times F\to G$ with the following properties.
\begin{enumerate}[label=(\roman*)]
\item Whenever $H$ is an Archimedean Riesz space and $\psi\colon  E\times F\to H$ is a Riesz bimorphism, there is a unique Riesz homomorphism $T\colon G\to H$ such that $T\circ \varphi = \psi$.
\item If $\psi(x,y)>0$ in $H$ whenever $x>0$ in $E$ and $y>0$ in $F$, then $G$ may be identified with the Riesz subspace of $H$ generated by $\psi[E\times F]$.
\item $\varphi$ induces an embedding of the algebraic tensor product of $E$ and $F$, denoted $E\otimes F$, in $G$.
\item $E\otimes F$ is dense in $G$ in the sense that for every $w\in G$ there exist $x_0\in E$ and $y_0\in F$ such that for every $\delta >0$ there is a $v\in E\otimes F$ such that $|w-v|\leq\delta x_0\otimes y_0$.
\item If $w>0$ in $G$, then there exist $x\in E^+$ and $y\in F^+$ such that $0<x\otimes y\leq w$.
\end{enumerate}
\end{theorem}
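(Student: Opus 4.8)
The plan is to realize $G$ concretely as a Riesz subspace of a Dedekind complete space of continuous functions, deduce the universal property (i) and the embedding (iii) from the construction, and then extract the approximation statements (iv) and (v), after which (ii) and the uniqueness clause in (i) become formal; the analytically substantial parts are (iv) and, especially, (v). First I would fix representations: by the Maeda--Ogasawara--Vulikh representation theorem every Archimedean Riesz space embeds as an order-dense Riesz subspace of $C_\infty(X)$, the continuous extended-real functions finite on a dense subset, for a suitable Stonean $X$; so take $E\hookrightarrow C_\infty(X)$ and $F\hookrightarrow C_\infty(Y)$. The assignment $(x,y)\mapsto\big((s,t)\mapsto x(s)y(t)\big)$, defined first on $E^+\times F^+$ via a principal-ideal localization so as to stay among finite continuous functions and then extended by bilinearity, is a Riesz bimorphism into $C_\infty(Z)$, where $Z$ is chosen Stonean and large enough that $C(X\times Y)$ embeds order-densely into $C(Z)$ (for instance $Z$ the Stone space of the Dedekind completion of $C(X\times Y)$). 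Let $\varphi$ be this map and let $G$ be the Riesz subspace of $C_\infty(Z)$ generated by $\varphi[E\times F]$; being a Riesz subspace of a $C_\infty$ space, $G$ is Archimedean, and $\varphi$ is a Riesz bimorphism by construction. For (iii), the algebraic map $E\otimes F\to G$ is injective because a vanishing tensor would, on the dense set where all relevant functions are finite, force a genuine linear dependence among the $x_i$ (or among the $y_i$).

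Property (iv) is the structural core. An arbitrary $w\in G$ is a lattice-linear expression in finitely many products $x_i\otimes y_i$; setting $x_0=|x_1|\vee\dots\vee|x_n|\in E^+$ and $y_0=|y_1|\vee\dots\vee|y_n|\in F^+$, one approximates $w$ uniformly, modulo the single dominating product $x_0\otimes y_0$, by honest elements of $E\otimes F$ — the point being that on compacta the lattice operations applied to such products separate into the two variables well enough to be uniformly approximable by polynomial-type tensors, which is a Stone--Weierstrass / partition-of-unity argument on the Stonean space. Once (iv) is in hand the universal property (i) follows: $T$ is forced to send $\varphi(x,y)$ to $\psi(x,y)$, hence is determined on $E\otimes F$; the Riesz-bimorphism inequality $|\psi(x,y)|\le\psi(|x|,|y|)$ makes $\psi$ continuous with respect to the dominated approximation of (iv), so that, using the Archimedeanness of $H$, $T$ extends uniquely to a Riesz homomorphism on all of $G$. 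Uniqueness is immediate since $\varphi[E\times F]$ generates $G$.

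For (v), take $0<w\in G$. By (iv) choose $v=\sum_{i=1}^n x_i\otimes y_i\in E\otimes F$ and $\delta$ small enough that $|w-v|\le\delta\,x_0\otimes y_0$ with $\delta\,x_0\otimes y_0$ negligible compared with $w$ on a suitable region; then $v$, and hence $w$, is bounded below by a positive constant on a nonempty clopen rectangle $A\times B$ with $A$ clopen in $X$ and $B$ clopen in $Y$. Using the order-density of $E$ in $C_\infty(X)$ pick $0\neq e\in E$ with $0\le e\le\chi_A$, and likewise $0\neq f\in F$ with $0\le f\le\chi_B$; then a small positive multiple of $e\otimes f$ is a nonzero element of $G^+$ lying below $w$, which is (v). I expect (v) — together with making the passage $(\text{iv})\Rightarrow(\text{i})$ fully rigorous — to be the main obstacle: (v) genuinely uses the order-completeness (extremal disconnectedness) of the ambient space and is exactly what distinguishes Fremlin's tensor product from cruder constructions. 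Finally (ii) is soft: if $\psi(x,y)>0$ whenever $x>0$ and $y>0$, then the $T$ of (i) has trivial kernel, since any $0<g\in G$ dominates some $x\otimes y>0$ by (v) and $T(x\otimes y)=\psi(x,y)>0$; hence $T$ is a Riesz isomorphism of $G$ onto the Riesz subspace of $H$ generated by $\psi[E\times F]$, which is the asserted identification of $E\bar{\otimes}F$.
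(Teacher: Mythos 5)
A point of reference before anything else: the paper does not prove this theorem --- it is quoted from \cite{fremlin_tensor_1972} (Theorem 4.2 and Corollary 4.4 there) as background, so there is no internal proof to compare yours against. Judged on its own terms, your outline follows the general architecture of Fremlin's construction (represent $E$ and $F$ by continuous functions, realize elementary tensors as pointwise products, let $G$ be the generated Riesz subspace, and derive everything from a density statement), and your treatments of (iii), (v) and (ii) are essentially right. But the two steps you treat as routine are exactly where the substance of Fremlin's argument lies, and as written they are gaps rather than proofs.

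The first gap is in (i). Knowing that $T$ is forced on $E\otimes F$ does not yet produce a map on $G$: a general element of $G$ is a lattice--linear expression $\sup_i\inf_j g_{ij}$ in algebraic tensors, and one must show that $\sup_i\inf_j\tilde\psi(g_{ij})$ in $H$ depends only on the element of $G$ and not on the chosen expression, where $\tilde\psi$ is the linear extension of $\psi$ to $E\otimes F$. Your appeal to ``continuity with respect to the dominated approximation of (iv)'' presupposes precisely the missing lemma: that an order inequality $|v|\le\delta\, x_0\otimes y_0$ holding in $G$ (i.e., pointwise in the representation) forces $|\tilde\psi(v)|\le\delta\,\psi(x_0,y_0)$ in $H$. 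That implication --- positivity of an element of the algebraic tensor product inside $C(X\times Y)$ versus positivity of its image under an arbitrary Riesz bimorphism --- is the technical core of Fremlin's paper and does not follow from $|\psi(x,y)|\le\psi(|x|,|y|)$; note also that $H$ is merely Archimedean, not uniformly complete, so ``extend by continuity'' needs the Archimedean uniqueness-of-limits argument made explicit. The second gap is (iv) itself: Stone--Weierstrass gives density of $C(X_0)\otimes C(Y_0)$ in $C(X_0\times Y_0)$ only after you localize to the principal ideals $E_{x_0}$, $F_{y_0}$ and pass to their Kakutani representations, and even then the approximants land in $C(X_0)\otimes C(Y_0)$ rather than in $E_{x_0}\otimes F_{y_0}$, so a further approximation using the norm density of $E_{x_0}$ in $C(X_0)$ is required; the phrase ``separate into the two variables well enough'' is the assertion to be proved, not an argument. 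Until these two points are supplied, the proposal is an outline of Fremlin's proof rather than a proof.
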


$G$ of Theorem \ref{tensor product}  is the \emph{Archimedean Riesz space tensor product of $E$ and $F$}, denoted by $E\bar{\otimes}F$. Any Archimedean Riesz space paired with a Riesz bimorphism satisfying the universal property $(i)$ is Riesz isomorphic to $G$. The Riesz bimorphism $\otimes\colon E\times F \to E\bar{\otimes} F$ embeds the algebraic tensor product $E\otimes F$ via $\otimes (e,f) = e\otimes f$ for all $e\in E$ and $f\in F$. According to $(ii)$, $E\bar{\otimes}F$ may be identified with the Riesz space generated by elementary tensors of the form $e\otimes f$. It follows from Proposition 2.2.11 of \cite{jameson_book} that for every element $h$ of $E\bar{\otimes}F$, there exist finite subsets $I$, $J$ of $\mathbb{N}$ and $g_{ij}\in E\otimes F$ such that $$h=\sup_{i\in I}\inf_{j\in J} \{g_{ij}\},$$ where $g_{ij}=\sum_{i=1}^n e_i\otimes f_i$ for some $n\in\mathbb{N}$, $e_i\in E$, and $f_i\in F$.  

%\begin{defn} The Riesz subspace $V$ of $E$ is said to be \emph{order dense} in $E$ if for each $u>0$ in $E$ there exists an element $v\in V$ such that $0<v\leq u $.\end{defn}

We are interested in how the tensor product performs with respect to several well-known completeness properties, given that each component has that completeness property. 

\begin{definition} Let $E$ be an Archimedean Riesz space and $\alpha$ an infinite cardinal.
\begin{enumerate}[label=(\roman*)]
\item $E$ is called \emph{Dedekind complete} if every nonempty subset of $E$ that is bounded above has a supremum (1.3 of \cite{zaanen_introduction_1997}).
\item $E$ is called \emph{Dedekind $\sigma$-complete} if every nonempty countable subset of $E$ that is bounded above has a supremum (1.3 of \cite{zaanen_introduction_1997}).
\item $E$ is called \emph{Dedekind $\alpha$-complete} if every nonempty subset $A$ of $E$ that is bounded above with cardinality no greater than $\alpha$, denoted $|A|\leq\alpha$, has a supremum (\cite{macula_1992}).
\end{enumerate}
\end{definition}

\noindent $C(X)$ is the Riesz space of continuous, real-valued functions on a topological space $X$. In a uniformly complete Riesz space, every principal ideal is Riesz isomorphic to $C(X)$ for a compact Hausdorff space $X$. We use the topological properties corresponding (via \cite{gillman_jerison}) to each type of completeness. 

\begin{definition} Let $X$ be a completely regular topological space.
\begin{enumerate}[label=(\roman*)]
\item $X$ is \emph{extremally disconnected} if every open set has open closure (1H of \cite{gillman_jerison}).
\item Every set of the form $\{x:f(x)= 0\}$ for some $f\in C(X)$ is a \emph{zero-set} of $X$; a \emph{cozero-set} is the complement of a zero-set (1.10, 1.11 of \cite{gillman_jerison}). $X$ is \emph{basically disconnected} if every cozero-set has an open closure (1H of \cite{gillman_jerison}).
\item A subset $V\subset X$ is said to be an \emph{$\alpha$-cozero set} if 
$V=\bigcup \mathcal{U},$ where $\mathcal{U}$ is a set of cozero-sets in $X$ with $|\mathcal{U}|\leq \alpha$. 
$X$ is \emph{$\alpha$-disconnected} if every $\alpha$-cozero set has open closure (\cite{macula_1992}).
\end{enumerate}
\end{definition}

\begin{theorem} Let $X$ be a completely regular topological space. Then
\begin{enumerate}[label=(\roman*)]
\item $C(X)$ is Dedekind complete if and only if $X$ is extremally disconnected (3N of \cite{gillman_jerison}).
\item $C(X)$ is Dedekind $\sigma$-complete if and only if $X$ is basically disconnected (3N of \cite{gillman_jerison}).
\item $C(X)$ is Dedekind $\alpha$-complete if and only if $X$ is $\alpha$-disconnected (\cite{stone_1949}).
\end{enumerate}
\end{theorem}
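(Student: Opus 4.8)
I would prove all three equivalences at once, letting $\alpha$ range over the infinite cardinals together with a formal symbol $\infty$, reading ``Dedekind $\infty$-complete'' as ``Dedekind complete'' and ``$\infty$-cozero set'' as ``arbitrary open set'' (legitimate because, in a completely regular space, the cozero-sets form a base for the topology), and simply dropping all cardinality bounds in the case $\alpha=\infty$. Part (ii) is then the case $\alpha=\aleph_{0}$ together with the standard fact that a countable union of cozero-sets is a cozero-set (if $V_{n}=\{g_{n}\neq 0\}$ with $0\le g_{n}\le 1$, then $\bigcup_{n}V_{n}$ is the cozero-set of $\sum_{n}2^{-n}g_{n}$), so that ``every $\aleph_{0}$-cozero set has open closure'' collapses to ``$X$ is basically disconnected.''

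For the implication ``$X$ $\alpha$-disconnected $\Rightarrow$ $C(X)$ Dedekind $\alpha$-complete,'' start from a nonempty $A\subseteq C(X)$ with $|A|\le\alpha$ bounded above by some $g$. After the routine reductions — replace $A$ by the (still cardinality-$\le\alpha$) set of suprema of its finite subsets to make it upward directed, then translate and truncate so that $0\le f\le g$ for every $f\in A$ — form the pointwise supremum $u(x)=\sup_{f\in A}f(x)$, which is lower semicontinuous with $0\le u\le g$. The crucial observation is that for each rational $r$ the super-level set $U_{r}=\{u>r\}=\bigcup_{f\in A}\{f>r\}$ is an $\alpha$-cozero set, so $\alpha$-disconnectedness makes $\overline{U_{r}}$ open. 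Define $h(x)=\sup\{r\in\mathbb{Q}:x\in\overline{U_{r}}\}$; then $\{h>c\}=\bigcup_{r>c}\overline{U_{r}}$ and $\{h<c\}=\bigcup_{r<c}(X\setminus\overline{U_{r}})$ are open, so $h\in C(X)$, and one checks $u\le h\le g$ pointwise, so $h$ is an upper bound for $A$. That $h$ is the \emph{least} upper bound is the one genuinely nontrivial point: if $h'\in C(X)$ dominates every $f\in A$ but $h'(x_{0})<h(x_{0})$, pick a rational $r$ with $h'(x_{0})<r<h(x_{0})$; then $x_{0}\in\overline{U_{r}}$, while $h'\ge u>r$ on $U_{r}$ forces $h'\ge r$ on $\overline{U_{r}}$, a contradiction. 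Hence $h=\sup A$ in $C(X)$.

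For the converse, suppose $C(X)$ is Dedekind $\alpha$-complete and let $V=\bigcup_{i\in\mathcal{I}}V_{i}$ be an $\alpha$-cozero set, with $V_{i}=\{f_{i}\neq 0\}$, $0\le f_{i}\le 1$, and $|\mathcal{I}|\le\alpha$. Take $A=\{(nf_{i})\wedge 1:i\in\mathcal{I},\ n\in\mathbb{N}\}$; then $|A|\le\alpha$ and $A$ is bounded above by the constant $1$, so $h:=\sup A$ exists in $C(X)$. Its pointwise supremum is $1$ on $V$ and $0$ on $X\setminus\overline{V}$, whence $h=1$ on $\overline{V}$, and a complete-regularity bump-function argument — if $h(x_{0})=\varepsilon>0$ with $x_{0}\notin\overline{V}$, subtract from $h$ a small continuous bump supported in the open set $W\cap\{h>\varepsilon/2\}$, where $W$ is a neighborhood of $x_{0}$ missing $\overline{V}$, producing a strictly smaller upper bound — forces $h=0$ on $X\setminus\overline{V}$. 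Thus $h=\chi_{\overline{V}}$ is continuous, so $\overline{V}$ is open and $X$ is $\alpha$-disconnected. The main obstacle is the same in both directions: the naive pointwise supremum is typically discontinuous, $\alpha$-disconnectedness is precisely the hypothesis that repairs it by opening up the closures of the relevant level sets, and complete regularity supplies the bump functions that certify leastness. (The cited arguments in \cite{gillman_jerison} and \cite{stone_1949} run along these lines.)
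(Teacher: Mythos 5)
The paper does not prove this theorem at all; it is stated as background with citations to 3N of \cite{gillman_jerison} and to \cite{stone_1949}, so there is no in-paper argument to compare against. Your proof is correct and is essentially the standard one from those sources: the forward direction repairs the lower semicontinuous pointwise supremum by replacing the super-level sets $U_r$ with their (open, by $\alpha$-disconnectedness) closures, and the converse recovers $\chi_{\overline{V}}$ as the supremum of the truncations $(nf_i)\wedge 1$; the two delicate points --- leastness of $h$ via continuity on $\overline{U_r}$, and forcing $h=0$ off $\overline{V}$ via a complete-regularity bump subtracted from $h$ --- are both handled correctly, as is the reduction of the Dedekind complete and $\sigma$-complete cases to the $\alpha$-indexed one (cozero-sets form a base, and a countable union of cozero-sets is a cozero-set).
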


\begin{definition} Let $E$ be a Riesz space and $0<u\in E$. A sequence $\{f_n\}_{n\in\mathbb{N}}$ is said to be a \emph{$u$-uniform Cauchy sequence} if for any $\epsilon > 0$ there exists $N\in\mathbb{N}$ such that $|f_m-f_n|< \epsilon u$ for all $m$, $n\geq N$. A sequence $\{f_n\}_{n\in\mathbb{N}}$ \emph{converges $u$-uniformly} to $f$ if there exists a sequence of numbers $\epsilon_n\downarrow 0$ such that $|f-f_n|\leq \epsilon_n u$ for all $n$. $E$ is \emph{uniformly complete} if, for every $u>0$ in $E$, every $u$-uniform Cauchy sequence has a $u$-uniform limit (pgs. 49-50 of \cite{zaanen_introduction_1997}). \end{definition}

\noindent See, for instance, section 12 of \cite{zaanen_introduction_1997} for the following line-up  in $E$ (augmented with Dedekind $\alpha$-completeness) of important completeness properties.
\begin{center}
Dedekind complete $\implies$ Dedekind $\alpha$-complete $\implies$ Dedekind $\sigma$-complete $\implies$ Archimedean
\end{center}

\begin{theorem}(39.2 of \cite{luxemburg_riesz_1971}) Every Dedekind $\sigma$-complete Riesz space is uniformly complete.\end{theorem}

\begin{theorem}(see \cite{aliprantis_langford_1984}) A \emph{uniform completion} of an Archimedean Riesz space $E$ is a pair ($\tilde{E}$, $i$) consisting of a uniformly complete Riesz space $\tilde{E}$ and a Riesz homomorphism $i\colon E\to\tilde{E}$ such that for every uniformly complete Riesz space $F$ and every Riesz homomorphism $\varphi\colon E\to F$, there is a unique Riesz homomorphism $\tilde{\varphi}\colon\tilde{E}\to F$ with $\varphi=\tilde{\varphi}\circ i$.\end{theorem}

\begin{theorem}(3 of \cite{aliprantis_langford_1984})\label{uniform completion}
For an Archimedean Riesz space $E$, a uniform completion of $E$ exists and is unique up to Riesz isomorphisms. \end{theorem}

\section{The Fremlin Tensor Product of Principal Ideals}

In our study of when the Fremlin tensor product of two Dedekind complete Riesz spaces is Dedekind complete, we will need some information about when the Fremlin tensor product of two ideals is an ideal. We introduce what it means to have a strong order unit as defined in \cite{zaanen_introduction_1997} in order to prove Theorem 3.3.

\begin{definition}\label{uniformly dense} Let $E$ be an Archimedean Riesz space. A subset $V$ of $E$ is \emph{uniformly dense} in $E$ if for every $w\in E$ there exist $u\in V$ such that for every $\epsilon >0$ there is a $v\in V$ such that $|w-v|\leq\epsilon u$.\end{definition}

\begin{definition} Let $E$ be an Archimedean Riesz space and $f\in E$. The \emph{principal ideal} generated by $f$ is the set $$E_f=\{g\in E : |g|\leq \lambda |f| \text{ for some $\lambda\in\mathbb{R}^+$}\}.$$ An element $u>0$ for which $E_u=E$ is called a \emph{strong order unit} of $E$.\end{definition}

\begin{theorem}\label{dense completion} Let $E$ be a uniformly complete Riesz space with a strong order unit $u$. If $V$ is a uniformly dense Riesz subspace of $E$ containing $u$, then $E$ is a uniform completion of $V$.\end{theorem}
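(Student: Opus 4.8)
The strategy is to verify the universal property in Definition \ref{uniform completion} directly for the pair $(E, \iota)$, where $\iota\colon V\hookrightarrow E$ is the inclusion. So let $F$ be a uniformly complete Riesz space and $\varphi\colon V\to F$ a Riesz homomorphism; I must produce a unique Riesz homomorphism $\tilde\varphi\colon E\to F$ with $\tilde\varphi\circ\iota=\varphi$, i.e.\ extending $\varphi$ from $V$ to all of $E$.

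First I would set up the extension on elements. Fix $w\in E$. Since $u$ is a strong order unit, $V$ is actually $u$-uniformly dense: for each $n$ pick $v_n\in V$ with $|w-v_n|\le \tfrac1n u$. Then $\{v_n\}$ is a $u$-uniform Cauchy sequence in $V$, so $\{\varphi(v_n)\}$ is a $\varphi(u)$-uniform Cauchy sequence in $F$ (a Riesz homomorphism is bounded with respect to the order-unit seminorm, since $|v_m-v_n|\le \epsilon u$ forces $|\varphi(v_m)-\varphi(v_n)|\le\epsilon\varphi(u)$). By uniform completeness of $F$ this sequence has a $\varphi(u)$-uniform limit; define $\tilde\varphi(w)$ to be that limit. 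The routine checks are: this is independent of the choice of approximating sequence (interleave two such sequences); it agrees with $\varphi$ on $V$ (take the constant sequence); and it is linear and lattice-preserving — each of these follows by approximating the relevant elements of $E$ by elements of $V$, applying $\varphi$, and passing to the $\varphi(u)$-uniform limit, using that the Riesz operations are uniformly continuous in the order-unit seminorm. Uniqueness is immediate: any Riesz homomorphism extending $\varphi$ must be continuous for these uniform structures (same boundedness estimate) and hence is determined on the uniformly dense set $V$.

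I expect the only genuinely delicate point to be the bookkeeping that makes $\tilde\varphi$ well defined and additive \emph{globally} rather than just on a single principal ideal: the approximation in Theorem \ref{tensor product}(iv)-style density is "pointwise in $w$" with the controlling unit depending on $w$, but here the strong order unit $u$ serves uniformly for all $w\in E$, which is exactly what collapses those per-element units into one and lets the Cauchy/limit arguments go through without friction. Once that is in hand, everything reduces to the standard fact that a lattice homomorphism between order-unit spaces is uniformly continuous, and the proof is complete.
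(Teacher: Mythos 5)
Your proposal is correct and follows essentially the same route as the paper: build $\tilde\varphi$ on each $w\in E$ as the $\varphi(u)$-uniform limit of $\varphi$ applied to a $u$-uniform Cauchy sequence from $V$ approximating $w$, then check it is a well-defined Riesz homomorphism determined uniquely by $\varphi$ (the paper delegates that last verification to Theorem 10.3(ii) of Zaanen rather than sketching it directly). Your remark that the strong order unit $u$ lets one replace the per-element controlling units from the definition of uniform density by the single unit $u$ is exactly the point the paper uses implicitly.
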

\begin{proof}
Let $\iota\colon V\to E$ be an embedding of $V$ into $E$. Suppose $F$ is a uniformly complete Riesz space and $\varphi\colon V\to F$ is a Riesz homomorphism. Let $x\in E$. By definition \ref{uniformly dense}, for every sequence of numbers $\epsilon_n\downarrow 0$ there exists $(x_n)_{n\in\mathbb{N}}\in V$ such that $|x-x_n|\leq\epsilon_n u$ for every $n\in\mathbb{N}$. Since $(x_n)_{n\in\mathbb{N}}$ is a $u$-uniform Cauchy sequence, for all $\epsilon > 0$ there exists $N\in\mathbb{N}$ such that $|x_n-x_m|<\epsilon u$ whenever  $n$, $m\geq N$. Then for all $\epsilon >0$, there exists $N$ such that $\forall n$, $m\geq N$, $$|\varphi(x_n)-\varphi(x_m)|=|\varphi(x_n-x_m)|=\varphi(|x_n-x_m|)<\epsilon\varphi(u).$$
Thus, $(\varphi(x_n))_{n\in\mathbb{N}}$ is a $\varphi(u)$-uniform Cauchy sequence of $F$ and converges $\varphi(u)$-uniformly in $F$. Define $\tilde{\varphi}(x)$ to be the $\varphi(u)$-uniform limit of $(\varphi(x_n))_{n\in\mathbb{N}}$ in $F$. By Theorem 10.3 (ii) of \cite{zaanen_introduction_1997}, $\tilde{\varphi}$ is a Riesz homomorphism that is uniquely determined by $\varphi$. 
Then $\varphi=\tilde{\varphi}\circ \iota$ and $(E,\iota)$ is a uniform completion of $V$.
\end{proof}

\begin{lemma}\label{dense ideal} Let $E$ and $F$ be uniformly complete Archimedean Riesz spaces. Then $E_x \bar{\otimes} F_y$ is uniformly dense in $(E\bar{\otimes}F)_{x\otimes y}$ for every $x\in E^+$ and $y\in F^+$. \end{lemma}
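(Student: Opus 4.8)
The plan is to show that every element of the principal ideal $(E\bar{\otimes}F)_{x\otimes y}$ can be uniformly approximated, in the sense of the definition of uniform density, by elements of the Riesz subspace $E_x\bar{\otimes}F_y$, with the uniform control provided by $x\otimes y$ itself. First I would set up the natural candidate for the inclusion $E_x\bar{\otimes}F_y\hookrightarrow(E\bar{\otimes}F)_{x\otimes y}$: the Riesz bimorphism $E_x\times F_y\to E\bar{\otimes}F$ obtained by restricting $\otimes$, which is strictly positive on strictly positive pairs, so by part (ii) of Theorem \ref{tensor product} its image generates a copy of $E_x\bar{\otimes}F_y$ inside $E\bar{\otimes}F$; since $E_x\bar{\otimes}F_y$ is generated by elementary tensors $e\otimes f$ with $|e|\le\lambda x$, $|f|\le\mu y$, every such tensor satisfies $|e\otimes f|\le\lambda\mu\,(x\otimes y)$, hence lies in $(E\bar{\otimes}F)_{x\otimes y}$, and the same bound propagates through finite lattice operations, so indeed $E_x\bar{\otimes}F_y\subseteq(E\bar{\otimes}F)_{x\otimes y}$.

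**The approximation step.**

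Next I would take an arbitrary $w\in(E\bar{\otimes}F)_{x\otimes y}$, so $|w|\le\lambda(x\otimes y)$ for some $\lambda>0$, and apply the density statement of Theorem \ref{tensor product}(iv): there exist $x_0\in E$, $y_0\in F$ such that for every $\delta>0$ there is $v\in E\otimes F$ with $|w-v|\le\delta\,(x_0\otimes y_0)$. The issue is that $v$ need not lie in $E_x\otimes F_y$ and that $x_0\otimes y_0$ is not controlled by $x\otimes y$; so the key move is a truncation. Given $\varepsilon>0$, choose $\delta$ small and write $v=\sum_{i=1}^n e_i\otimes f_i$; then replace $v$ by its "clipping" at level $\lambda(x\otimes y)$, namely $v' = (v\wedge\lambda(x\otimes y))\vee(-\lambda(x\otimes y))$. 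Using $|w|\le\lambda(x\otimes y)$ one checks $|w-v'|\le|w-v|$, so the approximation only improves, while $|v'|\le\lambda(x\otimes y)$ forces $v'\in(E\bar{\otimes}F)_{x\otimes y}$. To land inside $E_x\bar{\otimes}F_y$ I would instead truncate the factors: since $x$ is (after passing to $E_x$, which has $x$ as a strong order unit — but here $E$ need not) \ldots this is where care is needed, so I would work with the elementary-tensor representation directly, approximating each $e_i$ by $(e_i\wedge kx)\vee(-kx)\in E_x$ and each $f_i$ by the analogous truncation in $F_y$, and show via the bimorphism property of $\otimes$ and the estimate $|w-v|\le\delta(x_0\otimes y_0)$ together with $|w|\le\lambda(x\otimes y)$ that the resulting element of $E_x\otimes F_y\subseteq E_x\bar{\otimes}F_y$ is within $\varepsilon(x\otimes y)$ of $w$.

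**The main obstacle and how to handle it.**

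The hard part will be reconciling the two different "uniform control" elements: Fremlin's density theorem hands us an approximation controlled by some $x_0\otimes y_0$ depending on $w$, but the definition of uniform density for the pair $\big((E\bar{\otimes}F)_{x\otimes y},\,x\otimes y\big)$ wants control by $x\otimes y$ (which is a strong order unit of the principal ideal, so it is the only sensible choice). The bridge is the clipping argument: in the Archimedean Riesz space $E\bar{\otimes}F$, clipping an approximant $v$ of $w$ to the order interval $[-\lambda(x\otimes y),\lambda(x\otimes y)]$ containing $w$ does not increase the error (a general fact: if $|w|\le u$ then $|w-(v\wedge u)\vee(-u)|\le|w-v|$), and produces an element bounded by $\lambda(x\otimes y)$. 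The remaining point — that the clipped element actually lies in $E_x\bar{\otimes}F_y$ and not merely in $(E\bar{\otimes}F)_{x\otimes y}$ — follows because clipping by $x\otimes y$ is itself an operation inside the sublattice: $x\otimes y\in E_x\bar{\otimes}F_y$, and lattice operations on elements of $E_x\bar{\otimes}F_y$ stay there, so once $v$ has been replaced by an element of $E_x\otimes F_y$ via factorwise truncation, its clip at $\lambda(x\otimes y)$ remains in $E_x\bar{\otimes}F_y$. Assembling these, for each $\varepsilon>0$ I exhibit the required $v\in E_x\bar{\otimes}F_y$ with $|w-v|\le\varepsilon(x\otimes y)$, and since $x\otimes y\in E_x\bar{\otimes}F_y$ serves as the single witnessing element $u$ in the definition, uniform density follows.
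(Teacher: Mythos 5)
Your plan founders at exactly the two points you leave to be ``checked,'' and neither can be completed as stated. First, the factorwise truncation: if $v=\sum_{i=1}^n e_i\otimes f_i$ and you replace each $e_i$ by $(e_i\wedge kx)\vee(-kx)$ and each $f_i$ by the analogous element of $F_y$, there is no estimate relating the new sum to the old one. The representation of $v$ as a sum of elementary tensors is highly non-unique; the individual terms $e_i\otimes f_i$ may be enormous with cancellation among them even when $v$ (and $|w-v|$) is small, and truncating the factors destroys that cancellation, so the error you introduce is not controlled by $|w-v|$ or by $\lambda(x\otimes y)$ or by anything available. Moreover $(e_i\wedge kx)\vee(-kx)$ need not approximate $e_i$ in any relevant sense when $e_i\notin E_x$ (take $E=C[0,1]$, $x(t)=t$, $e_i=\mathbf{1}$). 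Second, the ``bridge'' does not bridge: your clipping fact (which is correct) yields $|w-v'|\le|w-v|\le\delta(x_0\otimes y_0)$ together with $|w-v'|\le 2\lambda(x\otimes y)$, but these two bounds do not combine to give $|w-v'|\le\epsilon(x\otimes y)$ for small $\delta$. In $C[0,1]$ with $s=\mathbf{1}$ and $t(r)=r$, the element $\delta s\wedge 2\lambda t=\min(\delta,2\lambda r)$ is not below $\epsilon t$ near $r=0$ whenever $2\lambda>\epsilon$, no matter how small $\delta$ is. So after clipping you have an approximant in $(E\bar{\otimes}F)_{x\otimes y}$, but its distance to $w$ is still measured against the wrong unit, and the element itself still need not lie in $E_x\bar{\otimes}F_y$ until the (uncontrolled) factorwise truncation has been performed.

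The paper avoids both difficulties simultaneously by starting from a different density result. It splits $w=w^+-w^-$ and invokes Allenby--Labuschagne to approximate $w^+$ \emph{from below} by some $u\in E^+\otimes F^+$, so that $0\le u\le w^+\le\lambda(x\otimes y)$ automatically. Writing $u=\sum f_i\otimes g_i$ with $f_i\in E^+$, $g_i\in F^+$ and applying the Riesz homomorphism $P_1^{\otimes}$ induced by the projection $E\times F\to E$ to the inequality $\sum f_i\otimes g_i\le\lambda(x\otimes y)$ gives $\sum f_i\le\lambda x$, hence $f_i\in E_x$ for every $i$ because the $f_i$ are positive; similarly $g_i\in F_y$. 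Thus $u\in E_x\otimes F_y$ with no truncation at all: one-sidedness and positivity of the approximant do the work that your clipping and factorwise truncation cannot. To salvage your route you would need to import some such positive, order-bounded approximation; as written, the proof does not go through.
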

\begin{proof} There exist compact Hausdorff spaces $X$ and $Y$ such that $E_x$ is Riesz isomorphic to $C(X)$ and $F_y$ is Riesz isomorphic to $C(Y)$ (e.g. 4.21 and 4.29 of \cite{aliprantis_positive_2006}). Then $E_x\bar{\otimes}F_y\cong C(X)\bar{\otimes}C(Y)$. Also, $C(X)\bar{\otimes}C(Y)$ is uniformly dense in $C(X\times Y)$ where $x\in E^+$ and $y\in F^+$ correspond to the unit functions $1_X$ and $1_Y$ respectively (2.2 of \cite{allenby_labuschagne}). 
%If $g\in(E\bar{\otimes}F)_{x\otimes y}$, then $|g|\leq\lambda (x\otimes y)$ for some $\lambda\in\mathbb{R}$. 
Finally, $(E\bar{\otimes}F)_{x\otimes y}$ has a unique uniform completion that is Riesz isomorphic to a Riesz subspace of $C(X\times Y)$. Since $C(X)\bar{\otimes}C(Y)$ is uniformly dense in $C(X\times Y)$, it follows that $E_x \bar{\otimes} F_y$ is uniformly dense in $(E\bar{\otimes}F)_{x\otimes y}$.
\end{proof}

\begin{theorem}\label{uc ideals} Let $E$ and $F$ be Dedekind $\alpha$-complete Riesz spaces for an infinite cardinal $\alpha$. If $E\bar{\otimes} F$ is Dedekind $\alpha$-complete, then $E_x\bar{\otimes}F_y$ is a principal ideal for every $x\in E^+$ and $y\in F^+$. In particular, $$E_x\bar{\otimes}F_y=(E\bar{\otimes}F)_{x\otimes y}.$$\end{theorem}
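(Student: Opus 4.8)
The plan is to realise $E_x\bar{\otimes}F_y$ as its own uniform completion while simultaneously showing that $(E\bar{\otimes}F)_{x\otimes y}$ is a uniform completion of $E_x\bar{\otimes}F_y$; uniqueness of the uniform completion (the result stated after Definition \ref{uniform completion}) then forces $E_x\bar{\otimes}F_y=(E\bar{\otimes}F)_{x\otimes y}$, which is by definition a principal ideal, giving both assertions at once.

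First I would record the structural facts. Principal ideals of Dedekind $\alpha$-complete spaces are again Dedekind $\alpha$-complete: a subset of a principal ideal that is bounded above there is bounded above in the ambient space, and the ambient supremum lies between an element of the subset and the bound, hence in the ideal. So each of $E_x$, $F_y$ and $(E\bar{\otimes}F)_{x\otimes y}$ is Dedekind $\alpha$-complete, hence uniformly complete by the completeness hierarchy, with respective strong order units $x$, $y$ and $x\otimes y$. The restriction of the canonical bimorphism to $E_x\times F_y$ is a strictly positive Riesz bimorphism (if $e>0$ in $E_x$ and $f>0$ in $F_y$ then $e>0$ in $E$, $f>0$ in $F$, so $e\otimes f>0$), so by Theorem \ref{tensor product}(ii) we may identify $E_x\bar{\otimes}F_y$ with the Riesz subspace of $E\bar{\otimes}F$ generated by $\{e\otimes f:e\in E_x,\ f\in F_y\}$. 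For such $e,f$ one has $|e\otimes f|\le|e|\otimes|f|\le\lambda(x\otimes y)$ for a suitable scalar $\lambda$, and domination by a multiple of $x\otimes y$ is preserved under finite sums, finite suprema and finite infima; hence this Riesz subspace is contained in $(E\bar{\otimes}F)_{x\otimes y}$ and contains $x\otimes y$.

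By Lemma \ref{dense ideal}, $E_x\bar{\otimes}F_y$ is uniformly dense in $(E\bar{\otimes}F)_{x\otimes y}$. Applying Theorem \ref{dense completion} with ambient space $(E\bar{\otimes}F)_{x\otimes y}$, strong order unit $x\otimes y$, and dense Riesz subspace $E_x\bar{\otimes}F_y$, we conclude that $(E\bar{\otimes}F)_{x\otimes y}$ is a uniform completion of $E_x\bar{\otimes}F_y$.

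The crux — and the step I expect to be the real obstacle — is to prove that $E_x\bar{\otimes}F_y$ is itself uniformly complete. Granting this, $E_x\bar{\otimes}F_y$ is its own uniform completion, so by uniqueness of the uniform completion the inclusion $E_x\bar{\otimes}F_y\hookrightarrow(E\bar{\otimes}F)_{x\otimes y}$ is an isomorphism, i.e. $E_x\bar{\otimes}F_y=(E\bar{\otimes}F)_{x\otimes y}$, the principal ideal generated by $x\otimes y$. To establish uniform completeness of $E_x\bar{\otimes}F_y$ I would use that $E_x$ and $F_y$ are uniformly complete Riesz spaces with strong order units, hence Riesz isomorphic to $C(X)$ and $C(Y)$ for compact Hausdorff spaces $X,Y$; then $E_x\bar{\otimes}F_y\cong C(X)\bar{\otimes}C(Y)$, which embeds into $C(X\times Y)$ via $f\otimes g\mapsto[(s,t)\mapsto f(s)g(t)]$, and one must argue that this Fremlin tensor product is uniformly complete (equivalently, sup-norm closed in $C(X\times Y)$). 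It is precisely here that genuine input is needed: the inclusion $E_x\bar{\otimes}F_y\subseteq(E\bar{\otimes}F)_{x\otimes y}$ together with uniform density does not by itself deliver uniform completeness of the smaller space, so some fact about Fremlin tensor products of $C(K)$-spaces (or of uniformly complete Riesz spaces with strong order units) must be brought in. Everything else is bookkeeping with the universal property of Theorem \ref{tensor product} and the completeness hierarchy.
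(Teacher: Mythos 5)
Your setup is sound and matches the paper up to the decisive step: the identification of $E_x\bar{\otimes}F_y$ with a Riesz subspace of $(E\bar{\otimes}F)_{x\otimes y}$ containing $x\otimes y$, the appeal to Lemma \ref{dense ideal}, and the conclusion via Theorem \ref{dense completion} that $(E\bar{\otimes}F)_{x\otimes y}$ is a uniform completion of $E_x\bar{\otimes}F_y$ are all exactly what the paper does. But the step you flag as ``the real obstacle'' is a genuine gap, and the route you sketch for closing it cannot work as stated: $C(X)\bar{\otimes}C(Y)$ is \emph{not} uniformly complete in general. When $X$ and $Y$ are both infinite compact Hausdorff spaces, $C(X)\bar{\otimes}C(Y)$ is a proper uniformly dense Riesz subspace of $C(X\times Y)$ (this is the content of the Allenby--Labuschagne and Hager references), so no general fact about Fremlin tensor products of $C(K)$-spaces will deliver uniform completeness of $E_x\bar{\otimes}F_y$. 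Your argument never uses the hypothesis that $E\bar{\otimes}F$ is Dedekind $\alpha$-complete except to get uniform completeness of the various principal ideals, and that cannot suffice, since the conclusion is false without that hypothesis.

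The missing idea is to use the hypothesis to constrain $X$ and $Y$. From your own chain one gets, by uniqueness of the uniform completion, that $(E\bar{\otimes}F)_{x\otimes y}\cong C(X\times Y)$ (both are uniform completions of $E_x\bar{\otimes}F_y\cong C(X)\bar{\otimes}C(Y)$). Since $(E\bar{\otimes}F)_{x\otimes y}$ is an ideal in the Dedekind $\alpha$-complete space $E\bar{\otimes}F$, it is Dedekind $\alpha$-complete, hence so is $C(X\times Y)$; therefore $X\times Y$ is $\alpha$-disconnected, in particular an $F$-space. By 14Q of \cite{gillman_jerison} the product of two infinite compact spaces is never an $F$-space, so $X$ or $Y$ is finite, and then Proposition 2 of \cite{hager_paper_1966} gives $C(X)\bar{\otimes}C(Y)=C(X\times Y)$, i.e.\ $E_x\bar{\otimes}F_y=(E\bar{\otimes}F)_{x\otimes y}$. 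This is the paper's argument; without the $F$-space step and Hager's theorem your proof does not close.
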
 
\begin{proof} 
Since $E_x$, $F_y$ are in particular uniformly complete, there exist a compact Hausdorff spaces $X$ and $Y$ such that $E_x\cong C(X)$ and $F_y\cong C(Y)$.
$E_x\bar{\otimes}F_y$ contains $x\otimes y$ and is uniformly dense in ${(E\bar{\otimes}F)}_{x\otimes y}$ by Lemma \ref{dense ideal}. 
As an ideal of a Dedekind $\alpha$-complete space, ${(E\bar{\otimes}F)}_{x\otimes y}$ is Dedekind $\alpha$-complete (e.g. Theorem 12.4 of \cite{zaanen_introduction_1997}).
Thus, ${(E\bar{\otimes}F)}_{x\otimes y}$ is a uniform completion of $E_x\bar{\otimes}F_y$ by Theorem \ref{dense completion}.
\par
On the other hand, $E_x\bar{\otimes}F_y$ is Riesz isomorphic to $C(X)\bar{\otimes} C(Y)$ which has $C(X\times Y)$ as a uniform completion (e.g. Stone-Weierstrass Theorem and Theorem \ref{dense completion}).
Since the uniform completion is unique up to isomorphism, $C(X\times Y)\cong {(E\bar{\otimes}F)}_{x\otimes y}$.
Hence, $C(X\times Y)$ is Dedekind $\alpha$-complete.
Therefore, $X\times Y$ is, in particular, basically disconnected which implies $X\times Y$ is an $F$-space (e.g. \cite{comfort_hindman_negrepontis}).
The product of two infinite compact spaces cannot be an $F$-space (14Q of \cite{gillman_jerison}), so $X$ or $Y$ is finite. By Proposition 2 of \cite{hager_paper_1966}, $C(X)\bar{\otimes}C(Y)= C(X\times Y)$. Therefore, $E_x\bar{\otimes}F_y\cong{(E\bar{\otimes}F)}_{x\otimes y}$.
\end{proof}

Note that Theorem \ref{uc ideals} would be automatic if, for ideals $I\subseteq E$ and $J\subseteq F$, we had that $I\bar{\otimes}J$ is an ideal of $E\bar{\otimes}F$ to which we give a counterexample in \ref{counterexample}.

\begin{definition} A function $p:[0,\infty)\to\mathbb{R}$ is said
 to be a \emph{piecewise polynomial} if there are $n\in\mathbb{N}$ and $t_1$, $\cdots$, $t_n\in [0,\infty)$ such that $t_1<t_2<\cdots <t_n$ and $p$ is a polynomial function on $[t_n,\infty)$ and $[t_i,t_{i+1}]$ for each $i=1,\cdots, n-1$ .\end{definition}
 
$PP([0,\infty))$ is the Archimedean Riesz space of piecewise polynomials on $[0,\infty)$. For $\epsilon >0$ and $a$, $b\in[0,\infty)$, we define $$B((a,b),\epsilon)=\{(x,y)\in [0,\infty)\times [0,\infty):\sqrt{(x-a)^2+(y-b)^2}<\epsilon \}.$$
Lemma \ref{linear bounded} follows from elementary calculus.

\begin{lemma}\label{linear bounded} If $p(x)$ is a piecewise polynomial on $[0,\infty )$ and $|p(x)|\leq C x$ for some $C\in\mathbb{R}^+$, then there exists $k\in\mathbb{R}^+$ such that $p''(x)=0$ whenever $x>k$.
\end{lemma}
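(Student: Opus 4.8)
The plan is to exploit the piecewise structure directly: since $p$ is a piecewise polynomial on $[0,\infty)$, there is a largest breakpoint $t_n$ such that $p$ agrees with a single polynomial $q$ on $[t_n,\infty)$. It suffices to show $q$ has degree at most $1$, for then $q''=0$ identically and we may take $k=t_n$. So the whole question reduces to a statement about a genuine polynomial $q$ satisfying $|q(x)|\le Cx$ for all sufficiently large $x$ (namely $x\ge t_n$).

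For that reduction, I would argue as follows. Write $q(x)=\sum_{j=0}^{d}a_j x^j$ with $a_d\neq 0$, and suppose toward a contradiction that $d\ge 2$. Then $q(x)/x \sim a_d x^{d-1}\to\pm\infty$ as $x\to\infty$, so $|q(x)|/x$ is unbounded on $[t_n,\infty)$, contradicting $|q(x)|\le Cx$ there. Hence $d\le 1$, so $q(x)=a_1x+a_0$ and $q''(x)=0$ for all $x$; in particular $p''(x)=q''(x)=0$ for every $x>t_n$, and $k:=t_n$ works. (If $p$ happens to be a single polynomial on all of $[0,\infty)$ with no breakpoints, the same degree argument applies with $k$ chosen to be any nonnegative real.)

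There is essentially no obstacle here; the only mildly delicate point is bookkeeping about what "piecewise polynomial" guarantees on the unbounded end — namely that $p$ is literally a polynomial, not just piecewise-polynomial, on a terminal ray $[t_n,\infty)$, which is exactly what the definition of $PP([0,\infty))$ in the excerpt provides. Everything else is the elementary fact that a polynomial of degree $\ge 2$ grows faster than any linear function. I would present it in two short steps: first isolate the terminal polynomial piece $q$ and reduce $p''(x)=0$ for $x>t_n$ to $\deg q\le 1$; then run the growth-rate contradiction to force $\deg q\le 1$.
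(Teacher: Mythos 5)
Your proposal is correct and follows essentially the same route as the paper: isolate the terminal polynomial piece on $[t_n,\infty)$, use the linear growth bound to force its degree to be at most $1$, and take $k=t_n$. The only difference is that you spell out the growth-rate contradiction explicitly, whereas the paper simply asserts that $|p(x)|\leq Cx$ bounds the degree by $1$.
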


\begin{theorem}\label{counterexample}
Suppose $E=PP([0,\infty))$. If $p(x)=x$, then $E_p\bar{\otimes}E_p$ is not an ideal in $E\bar{\otimes}E$.
\end{theorem}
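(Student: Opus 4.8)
The plan is to exhibit an element $h\in E\bar{\otimes}F$ with $0\le h\le p\otimes q$ such that $h\notin E_p\bar{\otimes}F_q$; since $p\in E_p$ and $q\in F_q$ give $p\otimes q\in E_p\bar{\otimes}F_q$, this shows $E_p\bar{\otimes}F_q$ is not solid in $E\bar{\otimes}F$, hence not an ideal. To make the computations concrete I would first realize both tensor products inside $C([0,\infty)^2)$: the assignment $(f,g)\mapsto\bigl((x,y)\mapsto f(x)g(y)\bigr)$ is a Riesz bimorphism from $E\times F$ into $C([0,\infty)^2)$ which is strictly positive, so by Theorem \ref{tensor product}(ii) it identifies $E\bar{\otimes}F$ with the Riesz subspace of $C([0,\infty)^2)$ generated by the functions $f(x)g(y)$; restricting the same bimorphism to $E_p\times F_q$ identifies $E_p\bar{\otimes}F_q$ with the Riesz subspace generated by those $f(x)g(y)$ with $f\in E_p$, $g\in F_q$, compatibly with the inclusion $E_p\bar{\otimes}F_q\subseteq E\bar{\otimes}F$. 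Recall from Lemma \ref{linear bounded} that every element of $E_p=\{f\in PP([0,\infty)):|f(x)|\le\lambda x\text{ for some }\lambda\}$ is a piecewise polynomial which is affine on some half-line $[k,\infty)$, and similarly for $F_q$.

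The candidate is $h=(x^2\otimes 1)\wedge(1\otimes y^2)\in E\bar{\otimes}F$ (where $1$ denotes the constant-one function, an element of $PP([0,\infty))$), which under the above identification is the function $(x,y)\mapsto\bigl(\min(x,y)\bigr)^2$. Since $\bigl(\min(x,y)\bigr)^2\le xy$ for all $x,y\ge 0$, we have $0\le h\le p\otimes q$, so everything reduces to proving $h\notin E_p\bar{\otimes}F_q$.

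Suppose for contradiction that $h\in E_p\bar{\otimes}F_q$. Applying inside $E_p\bar{\otimes}F_q$ the representation recalled in Section 2 (from Proposition 2.2.11 of \cite{jameson_book}), I can write $h=\sup_{i\in I}\inf_{j\in J}g_{ij}$ with $I,J$ finite and each $g_{ij}=\sum_k f_{ijk}\otimes g_{ijk}$ a finite sum with $f_{ijk}\in E_p$, $g_{ijk}\in F_q$; as functions this reads $h(x,y)=\max_{i}\min_{j}\sum_k f_{ijk}(x)g_{ijk}(y)$. Choose a single $k_0$ such that all of the finitely many $f_{ijk}$ and $g_{ijk}$ are affine on $(k_0,\infty)$, say $f_{ijk}(t)=a_{ijk}t+b_{ijk}$ and $g_{ijk}(t)=c_{ijk}t+d_{ijk}$ there. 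Fix $c>0$ and restrict to the ray $\{(t,ct):t>0\}$. For $t$ large enough that $t>k_0$ and $ct>k_0$, each $g_{ij}(t,ct)=\sum_k (a_{ijk}t+b_{ijk})(c_{ijk}ct+d_{ijk})$ is a polynomial in $t$ of degree at most $2$ whose coefficient of $t^2$ equals $cA_{ij}$, where $A_{ij}:=\sum_k a_{ijk}c_{ijk}$ does not depend on $c$. Hence $t^{-2}g_{ij}(t,ct)\to cA_{ij}$ as $t\to\infty$, and because the finite operations $\max_i,\min_j$ commute with limits of real sequences, $t^{-2}h(t,ct)\to c\max_i\min_j A_{ij}=:cM$, with $M$ independent of $c$. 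But $h(t,ct)=\bigl(\min(t,ct)\bigr)^2=\bigl(\min(1,c)\bigr)^2 t^2$ exactly, so $\bigl(\min(1,c)\bigr)^2=cM$ for every $c>0$; taking $c=1$ forces $M=1$ and then $c=2$ forces $1=2$, a contradiction. Therefore $h\notin E_p\bar{\otimes}F_q$, and since $0\le h\le p\otimes q\in E_p\bar{\otimes}F_q$, the subspace $E_p\bar{\otimes}F_q$ is not an ideal in $E\bar{\otimes}F$.

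I expect the delicate part to be the asymptotic analysis along the rays: choosing the one threshold $k_0$ that works simultaneously for the finitely many building blocks, checking that on each ray the relevant expressions really are polynomials of degree at most two in $t$ once $t$ is large, and justifying that the (finitely many) lattice operations may be interchanged with the limit $t\to\infty$. The only other point requiring care is the concrete realization of both Fremlin tensor products as Riesz subspaces of $C([0,\infty)^2)$, which is where Theorem \ref{tensor product}(ii) and the strict positivity of the point-evaluation bimorphism are needed so that all the pointwise manipulations are legitimate.
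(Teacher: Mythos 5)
Your proposal is correct, and it uses the same skeleton as the paper's proof --- the same witness $h=(x^2\otimes 1)\wedge(1\otimes y^2)$ dominated by $p\otimes q$, the same representation $h=\sup_i\inf_j g_{ij}$ with $g_{ij}\in E_p\otimes F_q$, and the same appeal to Lemma \ref{linear bounded} to force every factor from $E_p$ or $F_q$ to be affine beyond a common threshold --- but the final contradiction is reached by a genuinely different mechanism. The paper works locally: on $[k,\infty)^2$ the finitely many closed sets $S_{ij}=\{h=g_{ij}\}$ cover a set with nonempty interior, so (by a Baire-type argument for finite unions of closed sets) some $S_{i_0j_0}$ contains an open ball, on which $h$ coincides with a single $g_{i_0j_0}$ whose second partials vanish, contradicting $h_{xx}=2$ or $h_{yy}=2$ off the diagonal. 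You instead work asymptotically along the rays $y=cx$: the leading coefficient of $t\mapsto g_{ij}(t,ct)$ is linear in $c$, so $t^{-2}h(t,ct)\to cM$ with $M$ independent of $c$, while the exact value is $(\min(1,c))^2$, and $c=1$ versus $c=2$ is already incompatible. Your route avoids both the covering/interior argument and any differentiation of $h$ (which is only piecewise smooth), at the modest cost of justifying that $\max_i\min_j$ commutes with the limit $t\to\infty$ (immediate, since these are continuous functions of finitely many real variables) and that scalars $c>0$ pull out of $\max\min$. The concrete realization of both tensor products inside $C([0,\infty)^2)$ via the strictly positive point-evaluation bimorphism and Theorem \ref{tensor product}(ii), which you spell out, is used implicitly in the paper as well; making it explicit is a point in your favor, since all pointwise manipulations (in either proof) depend on it.
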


\begin{proof}
For $x$, $y\in\mathbb{R}$, let $h(x,y)=1\otimes y^2\wedge x^2\otimes 1\in E\bar{\otimes} E$.
Suppose $h\in E_p \bar{\otimes} E_p$. Then there exist finite subsets $I$, $J$ of $\mathbb{N}$ such that 
$$h(x,y)=\sup_{i\in I} \inf_{j\in J} \{g_{ij}(x,y)\}$$
for some $g_{ij}\in E_p\otimes E_p$.
For each $g_{ij}$, there exist $n\in \mathbb{N}$, $p_r(x)\in E_p$, and $q_r(y)\in E_p$ such that $$g_{ij}(x,y)=\sum_{r=1}^n p_r(x)\otimes q_r(y).$$
Since $p_r$, $q_r\in E_p$, there exists $\lambda_r\in\mathbb{R}^+$ such that 
$$|g_{ij}(x,y)|
\leq\sum_{r=1}^{n} |p_r(x)\otimes q_r(y)|
\leq \sum_{r=1}^{n} \lambda_r (x\otimes y) 
= \left(\sum_{r=1}^{n} \lambda_{r}\right) x\otimes y.$$ 
By Lemma \ref{linear bounded}, for every $i\in I$ and $j\in J$ there exists $k_{ij}\in\mathbb{R}^+$ such that 
\begin{align}
x> k_{ij} \text{ and } y> k_{ij} \implies (g_{ij})_{xx}=(g_{ij})_{yy}=0,
\end{align}
where $(g_{ij})_{xx}$, $(g_{ij})_{yy}$ are the second order partial derivatives of $g_{ij}$ with respect to $x$, $y$ respectively.

\par

Let $k=sup_{ij} \{k_{ij}\}$, $D=\{[k,\infty)\times[k,\infty)\}$, and $$S_{ij}=\{(x,y)\in D : h(x,y)=g_{ij}(x,y)\}.$$
Notice that $D= \bigcup_{i\in I, j\in J} S_{ij}$ and $int(D)=\{(k,\infty)\times (k,\infty)\}$. Then $int(\bigcup_{i\in I, j\in J} S_{ij})$ is nonempty and there exist $i_o\in I$, $j_o\in J$ such that $int(S_{i_o j_o})\neq\emptyset$.
Therefore, there exists $(c,d)\in D$ and $\epsilon > 0$ such that $B((c,d),\epsilon) \subseteq int(S_{i_o j_o})$. That is, $h(x,y)=g_{i_o j_o}(x,y)$ for every $(x,y)\in B((c,d),\epsilon)$.
It follows from (3.1) that for $(x,y)\in B((c,d),\epsilon)$,
\begin{align} 
h_{xx}(x,y)=h_{yy}(x,y)=0. 
\end{align}
On the other hand, since $h(x,y)=1\otimes y^2\wedge x^2\otimes 1$,
\begin{center}
if $a>b$, then $h_{yy}(a,b)=2\neq 0$;\\
if $a<b$, then $h_{xx}(a,b)=2\neq 0$.
\end{center}
This contradicts (3.2).
Then $h\notin E_p\bar{\otimes} E_p$. 
Since $h \leq p \otimes p$, it follows that $E_p\bar{\otimes} E_p$ is not an ideal in $E\bar{\otimes} E$.
\end{proof}

\section{Dedekind $\alpha$-complete Riesz Spaces}
We thank Dr.~Anton Schep for providing us with \cite{wada} and the slides of his talk in \cite{schep_talk} where he proved Theorem \ref{C(X) TP DC} for the Banach lattice tensor product. For an example of the Fremlin tensor product of two Dedekind complete Riesz spaces that is not Dedekind complete, see 3.5 of \cite{grobler_2022}.

\begin{theorem}\label{C(X) TP DC} Let $X$ and $Y$ be compact Hausdorff spaces and $\alpha$ an infinite cardinal. Let $C(X)$ and $C(Y)$ be Dedekind $\alpha$-complete. $C(X)\bar{\otimes}C(Y)$ is Dedekind $\alpha$-complete if and only if $X$ or $Y$ is finite. \end{theorem}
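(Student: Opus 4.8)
The plan is to prove the two implications separately; the ``only if'' direction carries the real content and is, in essence, a re-run of the $F$-space obstruction used in the proof of Theorem \ref{uc ideals}.

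\emph{The ``if'' direction.} By symmetry I may assume $X$ is finite, say $|X|=n$. Then $X$ is discrete and $X\times Y$ is homeomorphic to the disjoint union of $n$ copies of $Y$, so $C(X\times Y)$ is Riesz isomorphic to $C(Y)^n$. By Proposition~2 of \cite{hager_paper_1966} (invoked exactly as in the proof of Theorem \ref{uc ideals}, the finiteness of $X$ being the relevant hypothesis) one has $C(X)\bar{\otimes}C(Y)=C(X\times Y)$, and hence $C(X)\bar{\otimes}C(Y)\cong C(Y)^n$. Since suprema in a finite product are computed coordinatewise, $C(Y)^n$ inherits Dedekind $\alpha$-completeness from $C(Y)$, which holds by hypothesis. (If one prefers to avoid \cite{hager_paper_1966} here, the isomorphism $\mathbb{R}^n\bar{\otimes}C(Y)\cong C(Y)^n$ follows directly from Theorem \ref{tensor product}(ii): the map $((a_1,\dots,a_n),g)\mapsto(a_1g,\dots,a_ng)$ is a strictly positive Riesz bimorphism whose range generates $C(Y)^n$ as a Riesz subspace.)

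\emph{The ``only if'' direction.} I argue by contradiction: assume $X$ and $Y$ are both infinite and $C(X)\bar{\otimes}C(Y)$ is Dedekind $\alpha$-complete. Then by the line-up of completeness properties it is uniformly complete. First I realize $C(X)\bar{\otimes}C(Y)$ inside $C(X\times Y)$: the pointwise product map $(f,g)\mapsto f\otimes g$, where $(f\otimes g)(x,y)=f(x)g(y)$, is a Riesz bimorphism $C(X)\times C(Y)\to C(X\times Y)$ that is strictly positive, so by Theorem \ref{tensor product}(ii) the space $C(X)\bar{\otimes}C(Y)$ is identified with the Riesz subspace $R$ of $C(X\times Y)$ generated by these elementary tensors. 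Now $R$ contains the constant function $1_{X\times Y}=1_X\otimes 1_Y$, which is a strong order unit of the uniformly complete space $C(X\times Y)$, and $R$ contains the unital, point-separating subalgebra $C(X)\otimes C(Y)$, which is uniformly dense in $C(X\times Y)$ by the Stone--Weierstrass theorem; hence $R$ is uniformly dense in $C(X\times Y)$. By Theorem \ref{dense completion}, $C(X\times Y)$ is a uniform completion of $R\cong C(X)\bar{\otimes}C(Y)$. But a uniformly complete Archimedean Riesz space is trivially its own uniform completion, so uniqueness of the uniform completion (\cite{van_haandel_1993}) forces $C(X)\bar{\otimes}C(Y)\cong C(X\times Y)$. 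Consequently $C(X\times Y)$ is Dedekind $\alpha$-complete, so $X\times Y$ is $\alpha$-disconnected (\cite{stone_1949}); since every cozero-set is an $\alpha$-cozero set, $X\times Y$ is in particular basically disconnected, hence an $F$-space (e.g.\ \cite{comfort_hindman_negrepontis}). This contradicts the fact that the product of two infinite compact spaces is never an $F$-space (14Q of \cite{gillman_jerison}), completing the proof.

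I expect the only step needing genuine care to be the realization of $C(X)\bar{\otimes}C(Y)$ as a uniformly dense, unit-containing Riesz subspace of $C(X\times Y)$ --- that is, verifying strict positivity of the pointwise product bimorphism (so that Theorem \ref{tensor product}(ii) applies) together with the Stone--Weierstrass density of $C(X)\otimes C(Y)$ in $C(X\times Y)$. Everything afterwards is a direct chain through Theorem \ref{dense completion}, uniqueness of the uniform completion, the characterization of Dedekind $\alpha$-completeness of $C(X\times Y)$, and 14Q of \cite{gillman_jerison}.
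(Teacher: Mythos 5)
Your proof is correct and follows essentially the same route as the paper: the ``only if'' direction is exactly the paper's argument (identify $C(X)\bar{\otimes}C(Y)$ inside $C(X\times Y)$, invoke Theorem \ref{dense completion} and uniqueness of the uniform completion to get $C(X)\bar{\otimes}C(Y)\cong C(X\times Y)$, then derive the $F$-space contradiction via 14Q of \cite{gillman_jerison}), with the Stone--Weierstrass density step usefully made explicit. Your ``if'' direction uses Hager's Proposition 2 and the coordinatewise description $C(Y)^n$, which is precisely the alternative the paper records in the remark immediately following the theorem, in place of the direct pointwise supremum computation in the proof body; both are sound.
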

\begin{proof} 
Assume $C(X)\bar{\otimes}C(Y)$ is Dedekind $\alpha$-complete.
$C(X\times Y)$ is a uniform completion of $C(X)\bar{\otimes} C(Y)$ by Theorem \ref{dense completion}.
Since $C(X)\bar{\otimes}C(Y)$ is uniformly complete by assumption, $C(X)\bar{\otimes}C(Y)$  is Riesz isomorphic to $C(X\times Y)$.
Consequently, $X\times Y$ is $\alpha$-disconnected which implies $X\times Y$ is an $F$-space (e.g. \cite{comfort_hindman_negrepontis}).
The product of two infinite compact spaces cannot be an $F$-space (14Q of \cite{gillman_jerison}), so $X$ or $Y$ is finite.
\par
Assume that $Y$ is finite. Then $C(Y)\cong C(\{1,\cdots, n\})$ for some $n\in\mathbb{N}$.
Let $B$ be a bounded subset of $C(X)\bar{\otimes}C(Y)$ with $|B|\leq \alpha$.
For every $h\in B$, there are
$\{h_i\}_{i=1}^n \in C(X)$ such that $h_i(x)= h(x, i)$ for $x\in X$ and $i\in\{1,\cdots,n\}$.
For each $i\in\{1,\cdots,n\}$, note that $\{h_i(x)\}_{h\in B}$ is a bounded subset of $C(X)$ with cardinality no greater than $\alpha$ .
Since $C(X)$ is Dedekind $\alpha$-complete,
$$\sup_{h\in B} h(x,y) = \left\{ \begin{array}{ll}
\sup_{h\in B}h_1(x) & \text{if } y = 1 \\
\vdots & \vdots \\
\sup_{h\in B} \ h_n(x) & \text{if } y = n
\end{array}
\right.$$
exists for every $(x,y)\in X\times Y$.
Define $g_i(x)=\sup_{h\in B}h_i(x)$. The characteristic function of $\{i\}$, denoted $1_{\{i\}}$, is continuous in $C(\{1,\cdots,n\})$.
Then $$\sup_{h\in B} h = \sum_{i=1}^n g_i \otimes 1_{\{i\}}$$ is an element of $C(X)\bar{\otimes}C(\{1,\cdots,n\})$.
Thus, $C(X)\bar{\otimes}C(Y)$ is Dedekind $\alpha$-complete.
\end{proof}

Note that the second half of Theorem \ref{C(X) TP DC} follows from Hager's Proposition 2 of \cite{hager_paper_1966}: if $X$ or $Y$ is finite, then $C(X)\otimes C(Y)=C(X\times Y)$. Thus, $C(X)\bar{\otimes}C(Y)=C(X\times Y)$. Since the product of an $\alpha$-disconnected space with a finite space is $\alpha$-disconnected, $C(X\times Y)$ is Dedekind $\alpha$-complete.

\begin{definition} Let $E$ be an Archimedean Riesz space and let $I$ be a nonempty set. $c_{00}(I,E)$ is the set of all maps $f\colon I\to E$ such that $$S(f)=\{x\in I : f(x)\neq 0\}$$ is finite. We call $S(f)$ the \emph{support of $f$}. If $E=\mathbb{R}$, then $c_{00}(I,E)$ is written $c_{00}(I)$. \end{definition}

For $f$, $g\in c_{00}(I,E)$, $f\leq g$ if and only if $f(x)\leq g(x)$ in $E$ for every $x\in I$. With this pointwise ordering, $c_{00}(I,E)$ is an Archimedean Riesz space.

\begin{lemma}\label{c00 representation} Let $I$ be a nonempty set. Then $c_{00}(I)\bar{\otimes}E$ and $c_{00}(I,E)$ are Riesz isomorphic.\end{lemma}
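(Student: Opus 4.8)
The plan is to write down an explicit Riesz bimorphism and then recognize $c_{00}(I,E)$ as the Fremlin tensor product by means of the identification in Theorem \ref{tensor product}(ii). For $i\in I$ let $1_{\{i\}}\in c_{00}(I)$ denote the indicator function of $\{i\}$, and define $\varphi\colon c_{00}(I)\times E\to c_{00}(I,E)$ by letting $\varphi(f,e)$ be the map $i\mapsto f(i)\,e$. Since $\{i:f(i)\neq 0\}$ is finite, $\varphi(f,e)$ does lie in $c_{00}(I,E)$, and $\varphi$ is visibly bilinear.

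First I would check that $\varphi$ is a Riesz bimorphism. Fix $e\in E^+$; for $f_1,f_2\in c_{00}(I)$ and any $i\in I$ one has $(f_1\vee f_2)(i)\,e=\big(f_1(i)\,e\big)\vee\big(f_2(i)\,e\big)$ because multiplication by a nonnegative real is order preserving on $E$, and reading this pointwise gives $\varphi(f_1\vee f_2,e)=\varphi(f_1,e)\vee\varphi(f_2,e)$; the identical computation with the roles reversed (fixing $f\in c_{00}(I)^+$) handles the second variable. As observed just above the statement, $c_{00}(I,E)$ is an Archimedean Riesz space under the pointwise order. Finally I would verify the positivity hypothesis of Theorem \ref{tensor product}(ii): if $f>0$ in $c_{00}(I)$, choose $i_0$ with $f(i_0)>0$; then for $e>0$ in $E$ we get $\varphi(f,e)(i_0)=f(i_0)\,e>0$, so $\varphi(f,e)>0$.

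The one point that is more than a mechanical verification is that the Riesz subspace of $c_{00}(I,E)$ generated by $\varphi[c_{00}(I)\times E]$ is all of $c_{00}(I,E)$ — only then does Theorem \ref{tensor product}(ii) yield an isomorphism rather than a mere embedding. In fact the linear span already exhausts $c_{00}(I,E)$: if $g\in c_{00}(I,E)$ has support $\{i_1,\dots,i_n\}$, then $g=\sum_{k=1}^{n}\varphi(1_{\{i_k\}},g(i_k))$. With this in hand, Theorem \ref{tensor product}(ii) identifies $c_{00}(I)\bar{\otimes} E$ with the Riesz subspace of $c_{00}(I,E)$ generated by $\varphi[c_{00}(I)\times E]$, which is $c_{00}(I,E)$ itself, completing the proof.

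I expect no genuine obstacle here; if one preferred to avoid invoking Theorem \ref{tensor product}(ii), one could instead verify the universal property (i) directly, setting $T(g)=\sum_{i\in S(g)}\psi(1_{\{i\}},g(i))$ for a given Riesz bimorphism $\psi\colon c_{00}(I)\times E\to H$. Uniqueness of $T$ is immediate since the $\varphi(1_{\{i\}},e)$ span $c_{00}(I,E)$, and the only nontrivial input for checking that $T$ is a Riesz homomorphism is the disjointness $\psi(1_{\{i\}},e)\wedge\psi(1_{\{j\}},e')=0$ for $i\neq j$ and $e,e'\in E^+$, which follows from $0\leq\psi(1_{\{i\}},e)\wedge\psi(1_{\{j\}},e')\leq\psi(1_{\{i\}},e\vee e')\wedge\psi(1_{\{j\}},e\vee e')=\psi(1_{\{i\}}\wedge 1_{\{j\}},e\vee e')=0$. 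The conceptual content of the lemma is simply that $c_{00}(I)$ is the finite-support direct sum $\bigoplus_{I}\mathbb{R}$ and that Fremlin's construction commutes with it.
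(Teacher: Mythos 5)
Your proposal is correct, and your primary route differs from the paper's. The paper verifies the universal property of Theorem \ref{tensor product}(i) directly: it constructs, for an arbitrary Riesz bimorphism $\psi\colon c_{00}(I)\times E\to F$, the factoring map $T(g)=\sum_{y\in S(g)}\psi(1_{\{y\}},g(y))$ and checks that $T$ is linear and disjointness-preserving, hence a Riesz homomorphism with $\psi=T\circ\varphi$. You instead invoke Theorem \ref{tensor product}(ii): after checking that $\varphi(f,e)(i)=f(i)e$ is a Riesz bimorphism satisfying the strict positivity hypothesis ($f>0$, $e>0$ imply $\varphi(f,e)>0$), you observe that the linear span of the range of $\varphi$ is already all of $c_{00}(I,E)$ via $g=\sum_k\varphi(1_{\{i_k\}},g(i_k))$, so the Riesz subspace generated by $\varphi[c_{00}(I)\times E]$ is $c_{00}(I,E)$ itself and (ii) gives the identification at once. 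This is shorter and offloads the work onto Fremlin's result; the paper's argument is more self-contained and exhibits the factoring homomorphism explicitly, which is what later results actually reuse. Your closing remark sketching the universal-property route is essentially the paper's proof, and in fact your disjointness computation $0\leq\psi(1_{\{i\}},e)\wedge\psi(1_{\{j\}},e')\leq\psi(1_{\{i\}}\wedge 1_{\{j\}},e\vee e')=0$ makes explicit a step the paper passes over quickly when it asserts that $\sum_{y\in S(f)}\psi(1_{\{y\}},f(y))\wedge\sum_{y\in S(g)}\psi(1_{\{y\}},g(y))=0$. Both arguments are sound.
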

\begin{proof} We show that $c_{00}(I,E)$ has the universal property for the Fremlin tensor product of $c_{00}(I)$ and $E$ as in Theorem \ref{tensor product}.
\par
Define ${\varphi \colon c_{00}(I) \times E \to c_{00}(I,E)}$ by $$\hspace{2cm} \varphi(f,e)(x) = f(x) e \hspace{1cm} (x\in I,\ f\in c_{00}(I),\ e\in E).$$
It is straightforward to verify that $\varphi$ is a Riesz bimorphism.
\par
Let $F$ be an Archimedean Riesz space, and suppose {$\psi \colon c_{00}(I) \times E \to F$} is a Riesz bimorphism. 
If $g \in c_{00}(I, E)$, then $g$ is uniquely represented by $g = \sum_{y\in S(g)} g(y)1_{\{y\}}$. Then $$g(x) = \sum_{y\in S(g)} \varphi(1_{\{y\}}, g(y))(x)\hspace{2cm}\text{$(x\in I)$}$$ 
and $g$ is in the Riesz space generated by the range of $\varphi$.
For $g\in c_{00}(I, E)$, define $T(g) = \sum_{y\in S(g)}\psi(1_{\{y\}},g(y))$. We have the following diagram.
 $$\xymatrix{
c_{00}(I) \times E \ar[d]_{\psi} \ar[r]^{\varphi}
& c_{00}(I, E) \ar@{.>}[ld]^{T}\\
F}$$
Let $\lambda\in\mathbb{R}$ and $f$, $g\in c_{00}(I,E)$. Then by straightforward reasoning with the supports of $f$ and $g$,
\begin{align*}
T(\lambda f+g) =& T\left(\lambda\sum_{y\in S(f)} f(y)1_{\{y\}} + \sum_{y\in S(g)} g(y)1_{\{y\}}\right)\\
=& T\left(\sum_{y\in S(f)\cup S(g)} (\lambda f+g)(y)1_{\{y\}}\right)\\
=& \sum_{y\in S(f)\cup S(g)} \psi(1_{\{y\}},(\lambda f+g)(y))\\
=& \sum_{y\in S(f)\cup S(g)} \left[\psi(1_{\{y\}},\lambda f(y)) + \psi(1_{\{y\}},g(y))\right]\\
=& \lambda \sum_{y\in S(f)} \psi(1_{\{y\}}, f(y)) + \sum_{y\in S(g)} \psi(1_{\{y\}},g(y))\\
=& \lambda T(f) + T(g).
\end{align*}
Thus, $T$ is linear.
\par
Let $f,g \in c_{00}(I, E)$ with $f \wedge g = 0$.
Then $S(f)\cap S(g)=\emptyset$. Since $\psi$ is a Riesz bimorphism,
\begin{align*}
T(f)\wedge T(g)=& T (\sum_{y\in S(f)} f(y) 1_{\{y\}})\wedge T(\sum_{y\in S(g)} g(y) 1_{\{y\}})\\
=& \sum_{y\in S(f)} \psi(1_{\{y\}},f(y))\wedge \sum_{y\in S(g)} \psi( 1_{\{y\}},g(y))\\
=& \ 0.
\end{align*}
Then $T$ is a Riesz homomorphism and $\psi=T\circ\varphi$. Consequently, $c_{00}(I, E)$ is Riesz isomorphic to ${c_{00}(I) \bar{\otimes} E}$.
\end{proof}

\begin{theorem}\label{c_00 complete} Let $I$ be a nonempty set and let $E$ be an Archimedean Riesz space. 
If $E$ is  Dedekind $\alpha$-complete for an infinite cardinal $\alpha$, then ${c_{00}(I) \bar{\otimes} E}$ is Dedekind $\alpha$-complete.
\end{theorem}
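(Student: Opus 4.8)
The plan is to use Lemma \ref{c00 representation} to identify $c_{00}(I)\bar{\otimes}E$ with $c_{00}(I,E)$, and then prove directly that $c_{00}(I,E)$ is Dedekind $\alpha$-complete whenever $E$ is. So it suffices to take a nonempty set $B\subseteq c_{00}(I,E)$ with $|B|\leq\alpha$ that is bounded above, say by $g\in c_{00}(I,E)$, and produce $\sup B$.

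First I would record the pointwise candidate for the supremum. For each $x\in I$, the set $\{f(x):f\in B\}$ is a nonempty subset of $E$ of cardinality at most $\alpha$, and it is bounded above by $g(x)$ in $E$; since $E$ is Dedekind $\alpha$-complete, $s(x):=\sup_{f\in B} f(x)$ exists in $E$. This defines a map $s\colon I\to E$. The key point is that $s$ has finite support: if $f(x)\neq 0$ for no $f\in B$ then clearly the sup is $0$, but in general I need a uniform bound on the support. Here I would use that $B$ is bounded above by $g$ and also (taking the downward direction, or just fixing one element $f_0\in B$ and noting $s\geq f_0$ pointwise, hence $s\geq f_0$) that $s\geq f_0$ for any fixed $f_0\in B$ — wait, that only bounds below. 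The cleaner argument: $0\leq s^+\leq g^+$ pointwise and $s^-\leq f_0^-$ is not automatic. Instead observe directly that $S(s)\subseteq S(g)\cup S(f_0)$ for any fixed $f_0\in B$: if $x\notin S(g)$ then $g(x)=0$, so $f(x)\leq 0$ for all $f\in B$, hence $s(x)\leq 0$; and if moreover $x\notin S(f_0)$ then $f_0(x)=0$, so $s(x)\geq f_0(x)=0$; thus $s(x)=0$. Hence $S(s)$ is finite and $s\in c_{00}(I,E)$.

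Next I would verify that $s$ is genuinely the least upper bound of $B$ in $c_{00}(I,E)$. It is an upper bound since $f\leq s$ pointwise for each $f\in B$, which by the definition of the order on $c_{00}(I,E)$ is exactly $f\leq s$. If $u\in c_{00}(I,E)$ is any upper bound of $B$, then $f(x)\leq u(x)$ in $E$ for every $f\in B$ and every $x\in I$, so $u(x)$ is an upper bound in $E$ for $\{f(x):f\in B\}$, whence $s(x)=\sup_{f\in B}f(x)\leq u(x)$; this holds for all $x\in I$, so $s\leq u$. Therefore $s=\sup B$ exists in $c_{00}(I,E)$, and $c_{00}(I,E)$ — hence $c_{00}(I)\bar{\otimes}E$ by Lemma \ref{c00 representation} — is Dedekind $\alpha$-complete.

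I do not expect any serious obstacle here; the only mild subtlety is the finiteness of the support of the candidate supremum, which is handled by the pinching argument above using one fixed element of $B$ together with the given upper bound. Everything else is a routine transfer of the Dedekind $\alpha$-completeness of $E$ through the pointwise order on $c_{00}(I,E)$, combined with the identification already established in Lemma \ref{c00 representation}.
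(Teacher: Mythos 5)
Your proof is correct and follows essentially the same route as the paper: identify $c_{00}(I)\bar{\otimes}E$ with $c_{00}(I,E)$ via Lemma \ref{c00 representation}, form the pointwise supremum using the Dedekind $\alpha$-completeness of $E$, and check that the result still has finite support. The one point of genuine divergence is how the finite support of the candidate supremum is established, and here your version is actually the more careful one. The paper asserts that there exists a single $f\in c_{00}(I,E)$ with $S(h)\subseteq S(f)$ for every $h\in B$, apparently on the grounds that $B$ is bounded; this is false in general, since an upper bound only controls where elements of $B$ are positive. For instance, in $c_{00}(\mathbb{N})$ the set $B=\{-1_{\{n\}}:n\in\mathbb{N}\}$ is bounded above by $0$, yet $\bigcup_{h\in B}S(h)=\mathbb{N}$, so no such $f$ exists (the supremum is still $0\in c_{00}(\mathbb{N})$, so the theorem is unaffected). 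Your pinching argument --- $s(x)\leq g(x)=0$ off $S(g)$ and $s(x)\geq f_0(x)=0$ off $S(f_0)$ for one fixed $f_0\in B$, hence $S(s)\subseteq S(g)\cup S(f_0)$ --- is exactly the right repair, and the rest of your verification that the pointwise supremum is the supremum in $c_{00}(I,E)$ is routine and correct.
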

\begin{proof}
Let $B$ be a bounded subset of $c_{00}(I, E)$ such that $|B|\leq\alpha$.
Then there exists an $f\in c_{00}(I, E)$ such that $S(h )\subseteq S(f)$ for every $h\in B$.
From the Dedekind $\alpha$-completeness of $E$ it follows that $\sup_{h\in B} h(x)$ exists for each $x \in I$.
Define $$g(x) = \sup_{h\in B} h(x).$$ 
$S(g)$ is finite since $S(g)\subseteq S(f)$ and $S(f)$ is finite. Then $g \in c_{00}(I, E)$. 
By Lemma \ref{c00 representation}, ${c_{00}(I) \bar{\otimes} E} \cong c_{00}(I, E)$ is Dedekind $\alpha$-complete.
\end{proof}

\begin{corollary}\label{c_00 DC} Let $I$ be a nonempty set and let $E$ be an Archimedean Riesz space. 
If $E$ is  Dedekind complete, then ${c_{00}(I) \bar{\otimes} E}$ is Dedekind complete.
\end{corollary}
\begin{proof} Let $B$ be a bounded subset of $c_{00}(I, E)$. Then $B$ has some cardinality, say $\alpha$. In particular, $E$ is Dedekind $\alpha$-complete, so $\sup{B}$ exists in $c_{00}(I) \bar{\otimes} E$ by Theorem \ref{c_00 complete}. \end{proof}

In our concluding theorem, we combine our results on ideals and Dedekind $\alpha$-completeness to characterize exactly when the Fremlin tensor product of two Dedekind $\alpha$-complete Riesz spaces is Dedekind $\alpha$-complete.

\begin{theorem}\label{main result} Let $\alpha$ be an infinite cardinal. Suppose E and F are Dedekind $\alpha$-complete. The following are equivalent.
\begin{enumerate}
\item $E_x \bar{\otimes} F_y$ is Dedekind $\alpha$-complete for every $x \in E^+$ and $y \in F^+$.
\item $[E_x$ is finite dimensional for every $x \in E^+]$ or [$F_y$ is finite dimensional for every $y \in F^+]$.
\item $E \cong c_{00}(I)$ for a set $I \subseteq E$ or $F \cong c_{00}(J)$ for a set $J \subseteq F$.
\item $E\bar{\otimes}F \cong c_{00}(I,F)$ for a set $I \subseteq E$ or $E\bar{\otimes}F \cong c_{00}(J,E)$ for a set $J \subseteq F$.
\item $E\bar{\otimes}F$ is Dedekind $\alpha$-complete.
\end{enumerate}
\end{theorem}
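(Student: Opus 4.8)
The plan is to prove the cycle $(1)\Rightarrow(2)\Rightarrow(3)\Rightarrow(4)\Rightarrow(5)\Rightarrow(1)$, using the results already assembled: Theorem~\ref{uc ideals} for the step out of $(1)$, the $C(X)$-representation of principal ideals together with Theorem~\ref{C(X) TP DC} to convert finite-dimensionality into the finiteness of one factor, and Lemma~\ref{c00 representation} with Theorem~\ref{c_00 complete} to handle the $c_{00}$ descriptions.

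First I would treat $(1)\Rightarrow(2)$. Fix $x\in E^+$, $y\in F^+$. Since $E_x$ and $F_y$ are uniformly complete (each dominated by a strong order unit), write $E_x\cong C(X)$ and $F_y\cong C(Y)$ for compact Hausdorff $X$, $Y$, exactly as in the proof of Theorem~\ref{uc ideals}. By Theorem~\ref{uc ideals}, hypothesis $(1)$ gives $E_x\bar{\otimes}F_y=(E\bar{\otimes}F)_{x\otimes y}$, which is Dedekind $\alpha$-complete as an ideal in a Dedekind $\alpha$-complete space; so $C(X)\bar{\otimes}C(Y)$ is Dedekind $\alpha$-complete, and Theorem~\ref{C(X) TP DC} forces $X$ or $Y$ to be finite, i.e. $E_x$ or $F_y$ is finite dimensional. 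The subtlety here is the quantifier order: a priori which factor is finite could depend on $(x,y)$, and $(2)$ asserts one side works uniformly. I would resolve this with a standard diagonal argument: if neither alternative in $(2)$ holds, pick $x_0\in E^+$ with $E_{x_0}$ infinite dimensional and $y_0\in F^+$ with $F_{y_0}$ infinite dimensional; then applying the previous paragraph to the pair $(x_0,y_0)$ yields that $E_{x_0}$ or $F_{y_0}$ is finite dimensional, a contradiction. \emph{This diagonal/uniformity step is the main obstacle}, and it is really the only nonroutine point in the whole chain.

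Next, $(2)\Rightarrow(3)$: suppose $E_x$ is finite dimensional for every $x\in E^+$. A finite-dimensional Archimedean Riesz space is Riesz isomorphic to $\mathbb{R}^n$ with coordinatewise order, so every principal ideal of $E$ is atomic; hence $E$ itself is atomic, and moreover every $E_x$ being finite dimensional means each element of $E$ dominates only finitely many atoms. Let $I$ be the set of atoms of $E$; the map sending $f\in E$ to its coordinate function $i\mapsto \lambda_i(f)$ on $I$ (where $f$ has a finite representation in terms of atoms) is a Riesz isomorphism of $E$ onto $c_{00}(I)$, since finiteness of each $E_x$ guarantees the image lands in $c_{00}(I)$ and atomicity guarantees surjectivity and injectivity. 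Symmetrically if the $F_y$ side holds. Then $(3)\Rightarrow(4)$ is immediate from Lemma~\ref{c00 representation}: $E\bar{\otimes}F\cong c_{00}(I)\bar{\otimes}F\cong c_{00}(I,F)$, and symmetrically. And $(4)\Rightarrow(5)$ is immediate from Theorem~\ref{c_00 complete}, since $c_{00}(I,F)\cong c_{00}(I)\bar{\otimes}F$ is Dedekind $\alpha$-complete because $F$ is.

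Finally, $(5)\Rightarrow(1)$ is the easiest link: if $E\bar{\otimes}F$ is Dedekind $\alpha$-complete, then for any $x\in E^+$, $y\in F^+$ the principal ideal $(E\bar{\otimes}F)_{x\otimes y}$ is Dedekind $\alpha$-complete (an ideal in a Dedekind $\alpha$-complete space, e.g. Theorem~12.4 of \cite{zaanen_introduction_1997}), and by Theorem~\ref{uc ideals} this ideal equals $E_x\bar{\otimes}F_y$, giving $(1)$. Assembling the implications $(1)\Rightarrow(2)\Rightarrow(3)\Rightarrow(4)\Rightarrow(5)\Rightarrow(1)$ closes the cycle and proves the equivalence; I would present the argument in exactly this order so that each step only invokes results already in hand.
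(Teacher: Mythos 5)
Your proposal follows essentially the same cycle $(1)\Rightarrow(2)\Rightarrow(3)\Rightarrow(4)\Rightarrow(5)\Rightarrow(1)$ as the paper, and every link can be made to work, but two points deserve comment. First, in $(1)\Rightarrow(2)$ you invoke Theorem~\ref{uc ideals} to conclude $E_x\bar{\otimes}F_y=(E\bar{\otimes}F)_{x\otimes y}$; the hypothesis of that theorem is that $E\bar{\otimes}F$ is Dedekind $\alpha$-complete, which is statement $(5)$, not $(1)$, so as written this citation is circular. Fortunately the detour is unnecessary: hypothesis $(1)$ already says directly that $E_x\bar{\otimes}F_y\cong C(X)\bar{\otimes}C(Y)$ is Dedekind $\alpha$-complete, which is all that Theorem~\ref{C(X) TP DC} needs, and this is exactly how the paper argues; the quantifier issue is then handled by the same contrapositive you describe (if $E_u$ and $F_v$ were both infinite dimensional, $E_u\bar{\otimes}F_v$ could not be Dedekind $\alpha$-complete). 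Second, for $(2)\Rightarrow(3)$ the paper simply cites Theorem~61.4 of \cite{luxemburg_riesz_1971}, whereas you sketch the underlying argument: finite-dimensional principal ideals force $E$ to be atomic with each element dominating only finitely many atoms, whence $E\cong c_{00}(I)$ on the set $I$ of atoms. Your version is more self-contained but leans on the standard fact that a finite-dimensional Archimedean Riesz space is lattice-isomorphic to $\mathbb{R}^n$; the citation buys brevity. The remaining links $(3)\Rightarrow(4)$, $(4)\Rightarrow(5)$, and $(5)\Rightarrow(1)$ coincide with the paper's proof.
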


\begin{proof}
$(1)\implies(2)$
Let $x\in E^+$ and $y\in F^+$. There exist compact Hausdorff topological spaces $X$, $Y$ such that $E_x\cong C(X)$ and $F_y\cong C(Y)$ (Theorems 4.21 and 4.29 of \cite{aliprantis_positive_2006}).
If $E_x \bar{\otimes} F_y$ is Dedekind $\alpha$-complete, then $X$ or $Y$ is finite by Theorem \ref{C(X) TP DC}. 
Then $E_x$ or $F_y$ is finite dimensional.
We claim that this holds either for every $x\in E^+$ or for every $y\in F^+$.
Indeed, if there exists $u\in E^+$, $v\in F^+$ such that $E_{u}$, $F_{v}$ are infinite dimensional, then $E_u\bar{\otimes}F_v$ cannot be Dedekind $\alpha$-complete by Theorem \ref{C(X) TP DC}. Thus, either $E_x$ is finite dimensional for every $x\in E^+$ or $F_y$ is finite dimensional for every $y\in F^+$.
\par
$(2)\implies(3)$ Follows from Theorem 61.4 in \cite{luxemburg_riesz_1971}.
\par 
$(3)\implies(4)$ Follows from Lemma \ref{c00 representation}.
\par
$(4)\implies(5)$ Follows from Theorem \ref{c_00 complete}.
\par 
$(5)\implies(1)$ By Theorem \ref{uc ideals}, $E_x\bar{\otimes}F_y$ is an ideal for every $x\in E^+$, $y\in F^+$. Thus, $E_x\bar{\otimes}F_y$ is Dedekind $\alpha$-complete (e.g. Theorem 12.4 of \cite{zaanen_introduction_1997}). 
\end{proof}

Theorem \ref{main result} can be generalized to a Dedekind $\beta$-complete space $E$ and a Dedekind $\gamma$-complete space $F$ for $\beta\neq\gamma$. In this case, $min\{\beta,\gamma\}$ replaces $\alpha$ in statements $(1)$ and $(5)$. We emphasize the result for $\alpha$ a countable cardinal, in which case we say Dedekind $\sigma$-complete. Since Theorem \ref{main result} is for an arbitrary infinite cardinal, we have the concluding corollary.

\begin{corollary} Suppose E and F are Dedekind complete. The following are equivalent.
\begin{enumerate}
\item $E_x \bar{\otimes} F_y$ is Dedekind complete for every $x \in E^+$ and $y \in F^+$.
\item $[E_x$ is finite dimensional for every $x \in E^+]$ or [$F_y$ is finite dimensional for every $y \in F^+]$.
\item $E \cong c_{00}(I)$ for a set $I \subseteq E$ or $F \cong c_{00}(J)$ for a set $J \subseteq F$.
\item $E\bar{\otimes}F \cong c_{00}(I,F)$ for a set $I \subseteq E$ or $E\bar{\otimes}F \cong c_{00}(J,E)$ for a set $J \subseteq F$.
\item $E\bar{\otimes}F$ is Dedekind complete.
\end{enumerate}
\end{corollary}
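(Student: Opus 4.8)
The plan is to obtain this corollary as the ``all infinite cardinals at once'' specialization of Theorem \ref{main result}, using two elementary facts: a Dedekind complete Riesz space is Dedekind $\alpha$-complete for every infinite cardinal $\alpha$, and --- crucially --- Theorem \ref{c_00 DC} is the Dedekind complete analogue of Theorem \ref{c_00 complete}. Conditions (2), (3), (4) here are verbatim those of Theorem \ref{main result} and mention no cardinal, so only conditions (1) and (5) require attention. I would prove the cycle $(1)\Rightarrow(2)\Rightarrow(3)\Rightarrow(4)\Rightarrow(5)\Rightarrow(1)$.

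For $(1)\Rightarrow(2)$: assuming (1), each $E_x\bar{\otimes}F_y$ is Dedekind complete, hence Dedekind $\aleph_0$-complete; since $E$ and $F$ are Dedekind $\aleph_0$-complete as well, the implication $(1)\Rightarrow(2)$ of Theorem \ref{main result} (taken with $\alpha=\aleph_0$) delivers (2). The implications $(2)\Rightarrow(3)$ and $(3)\Rightarrow(4)$ are imported unchanged from the proof of Theorem \ref{main result}: the first is Theorem 61.4 of \cite{luxemburg_riesz_1971} and the second is Lemma \ref{c00 representation}, and neither statement refers to a cardinal or to any completeness beyond being Archimedean.

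For $(4)\Rightarrow(5)$: if, say, $E\bar{\otimes}F\cong c_{00}(I,F)$, then because $F$ is Dedekind complete, Theorem \ref{c_00 DC} together with Lemma \ref{c00 representation} shows $c_{00}(I,F)\cong c_{00}(I)\bar{\otimes}F$ is Dedekind complete, whence so is $E\bar{\otimes}F$; the other case is symmetric. For $(5)\Rightarrow(1)$: if $E\bar{\otimes}F$ is Dedekind complete then it is in particular uniformly complete and Dedekind $\alpha$-complete for every infinite $\alpha$, so Theorem \ref{uc ideals} applies and gives $E_x\bar{\otimes}F_y=(E\bar{\otimes}F)_{x\otimes y}$ for all $x\in E^+$ and $y\in F^+$; a principal ideal of a Dedekind complete Riesz space is Dedekind complete (e.g. Theorem 12.4 of \cite{zaanen_introduction_1997}), so (1) holds and the cycle closes.

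I do not expect a genuine obstacle: all the substantive work lives in Theorems \ref{main result}, \ref{uc ideals}, and \ref{c_00 DC}. The single point that needs care is to route the forward directions $(4)\Rightarrow(5)$ and $(5)\Rightarrow(1)$ through the \emph{Dedekind complete} Theorem \ref{c_00 DC} (and through Theorem \ref{uc ideals}, whose hypotheses --- uniform completeness and Dedekind $\alpha$-completeness of $E\bar{\otimes}F$ --- are supplied free of charge by Dedekind completeness) rather than through the cardinal-bounded Theorem \ref{c_00 complete}; this is exactly what lets the cardinality restriction drop away.
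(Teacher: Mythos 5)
Your proposal is correct and follows essentially the same route as the paper: it chains $(1)\Rightarrow(2)\Rightarrow(3)\Rightarrow(4)$ through Theorem \ref{main result} (using that Dedekind completeness gives Dedekind $\alpha$-completeness for every infinite $\alpha$), obtains $(4)\Rightarrow(5)$ from Theorem \ref{c_00 DC}, and closes the cycle with $(5)\Rightarrow(1)$ via Theorem \ref{uc ideals} and the fact that an ideal of a Dedekind complete space is Dedekind complete. The one point you flag as needing care --- routing the unbounded-cardinal directions through Theorem \ref{c_00 DC} rather than Theorem \ref{c_00 complete} --- is exactly how the paper handles it.
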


\section*{Acknowledgments}

The authors would like to thank Dr.~Samuel Lisi for a valuable conversation regarding Section 3.
We are indebted to Dr.~Mohamed Amine Ben Amor for spotting an error in the previous version of Lemma 3.4.
In addition, we thank the referee for numerous helpful comments that have improved this paper.
The authors declare that there is no conflict of interest.

\nocite{pinter_1971}

%%%%%%%%%%%%%%%%%%%%%%%%%%%%%%%%%%%%%%%%%%%%%%%%%%%%%%%%%%%%%%
\end{document}